\documentclass[11pt]{amsart}
\usepackage[latin1]{inputenc}
\usepackage{mathrsfs}
\usepackage{mathtools}
\usepackage{amsmath, amsthm, amsfonts, amssymb}
\usepackage{enumitem}
\usepackage{tikz}
\usepackage{tikz-cd}
\usepackage[all]{xy}
\usepackage{xcolor}

\usepackage[left=2.3cm,top=3.3cm,right=2.3cm,bottom=2.7cm]{geometry}

\theoremstyle{plain}
\newtheorem{thm}{Theorem}[section]

\newtheorem{cor}[thm]{Corollary}

\newtheorem{prop}[thm]{Proposition}
\newtheorem{problem}[thm]{Problem}
\theoremstyle{definition}
\newtheorem{example}[thm]{Example}

\newtheorem{rmk}[thm]{Remark}

\numberwithin{equation}{section}

 \usepackage{todonotes}

\newcommand{\alb}{\mathrm{alb}}

\newcommand{\ev}{\mathrm{ev}}

\newcommand{\im}{\mathrm{Im}}

\newcommand{\supp}{\mathrm{supp}}

\newcommand{\CHM}{\mathrm{\CHM}}

\usepackage[
urlcolor=blue,
colorlinks=true,
linkcolor=blue,
citecolor=blue,
]{hyperref}

\newcommand{\CC}{\mathbb{C}}
\newcommand{\QQ}{\mathbb{Q}}

\newcommand{\PP}{\mathbb{P}}

\newcommand{\sO}{\mathcal{O}}

\begin{document}
    \title[On the canonical degree of a threefold of general type]{On the canonical degree of a Gorenstein minimal threefold of general type}
	\author{Jiabin Du}
    \address{Shanghai Institute for Mathematics and Interdisciplinary Sciences (SIMIS)\\Shanghai
200433, China}
\address{Research Institute of Intelligent Complex Systems\\Fudan University\\Shanghai 200433, China}
    \email{jiabin.du@simis.cn}
    
    \author{Yong Hu}
    \address{ School of Mathematical Sciences \\Shanghai Jiao Tong University\\ Dongchuan  Road 800  \\ Shanghai 200240, China}
    \email{yonghu@sjtu.edu.cn}
	
	\date{\today}
	\subjclass[2010]{14J30, 14C20}
	\keywords{threefold of general type, canonical map}
		\begin{abstract}
		Let $X$ be a Gorenstein minimal   $3$-fold of general type whose canonical map is generically finite. We prove that if  $p_g(X)> 243$, then the  degree of the canonical map is at most $72$. Moreover, equality holds  only if the general fibre $F$ of the Albanese morphism of $X$ is a smooth  minimal surface of general type satisfying $p_g(F)=3,q(F)=0$ and $K_F^2=36$, and the canonical map of $F$ has degree $36$. This result improves the lower bound on  $p_g(X)$ previously obtained by Jin-Xing Cai~\cite{Cai08}.  

        As a consequence, we show that if the canonical degree is bigger than $64$, then the general fibre of the Albanese morphism of $X$ is a surface with irregularity zero.
	\end{abstract}
\maketitle

\section{Introduction}

The canonical map is a fundamental tool in the study of the explicit birational geometry of varieties of general type. For a smooth  curve $C$ of genus $g\geq 2$, it's well-known that the canonical map of $C$ is either an  embedding or a finite morphism of degree $2$.  Let $S$ be a smooth  surface of general type whose canonical map is  generically finite.  Beauville \cite{Bea79} proved that the degree of the canonical map of $S$ is bounded.  Xiao \cite{Xia86} showed that if $p_g(S)>132$, then the canonical map of $S$ has degree at most $8$.

For a Gorenstein minimal $3$-fold of general type with generically finite canonical map,  Meng Chen \cite[\S 2.10]{Chen05} posted the following problem :
\begin{problem}
Let $X$ be a Gorenstein minimal 3-fold with at worst locally factorial terminal singularities. Suppose the canonical map $\phi_{X}$ of $X$ is generically finite onto its image. Is the generic degree $d_X$ of $\phi_X$ universally upper bounded ?
\end{problem}
An affirmative answer was given by Hacon \cite{Hac04}, who  established the bound that $d_X\leq 576$. Subsequently, Du and Gao \cite{DG16} improved this to  $d_X\leq 360$. Later Du \cite{Du18} obtained a further refinement under an additional assumption on the canonical image of $\phi_X$.

It is natural to expect that, as in the surface case, such an upper bound should become smaller when the geometric genus is large. In this direction,  Cai \cite{Cai08} proved that $d_X\leq 72$ whenever $p_g(X)>105411$.  

Our first main result is the following
\begin{thm}\label{thm:main}
   Let $X$ be a Gorenstein minimal  threefold of general type with canonical divisor $K_X$. Suppose that $|K_X|$ defines a generically finite map $\phi_X$ of degree $d$. If $p_g(X)>243$, then $d\leq72$. 
   
   Moreover, if $d=72$, then  the general Albanese fibre $F$ of  $X$ is a smooth  minimal surface of general type satisfying $p_g(F)=3,q(F)=0$ and $K_F^2=36$, and the canonical map of $F$ has degree $36$.
\end{thm}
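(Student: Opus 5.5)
The plan is to follow the strategy of Hacon and Cai, and to sharpen the numerical inequalities. The key ingredient is the inequality $K_X^3\ge d\,(\deg W)$ together with a Noether-type lower bound on $\deg W$ in terms of $p_g(X)$, where $W=\phi_X(X)\subset\PP^{p_g-1}$ is the canonical image. Set $p_g=p_g(X)$. Since $W$ is a nondegenerate threefold, $\deg W\ge p_g-3$. Hence
\[
d(p_g-3)\le d\deg W\le K_X^3 .
\]
On the other side we need an upper bound for $K_X^3$ that is linear in $p_g$. For Gorenstein minimal threefolds, Miyaoka--Yau gives $K_X^3\le 72\chi(\omega_X)$. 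We then split into cases according to the irregularity and the Albanese map.

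\textbf{Case 1: $\chi(\omega_X)$ is controlled by $p_g$.} If $q(X)=0$, then $\chi(\omega_X)=p_g-q_2+q_1-1\le p_g+\text{(small)}$. Combining with the previous display gives $d(p_g-3)\le 72\chi(\omega_X)$. To reach $d\le 72$ exactly one needs a slightly better degree bound, $\deg W\ge$ roughly $p_g-2$ or more; Kobayashi-type refinements for Gorenstein threefolds give $\deg W\ge 2p_g-6$ unless $W$ is ruled by planes. I expect this to yield $d\le 36$ in the generic case, with only the case of $W$ of minimal degree (a rational normal scroll, or a cone) remaining.

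\textbf{Case 2: the degenerate case.} When $W$ has near-minimal degree, or when $q(X)>0$, the image $W$, resp.\ the Albanese map, is fibred. Here I would use the fibration $f\colon X\to B$ given either by the Albanese morphism or by the ruling of the scroll, with general fibre a surface $F$. The canonical map restricted to $F$ factors through $|K_X|_F|\subseteq |K_F|$, up to the correction $K_X|_F=K_F$ valid because $B$ is a curve. So $d=\deg(\phi_X|_F)\cdot\deg(\text{map of base})$, which is essentially $d\le d_F\cdot 2$ when the image of $F$ is a plane and the base map has degree at most $2$. Beauville's results for surfaces give $d_F\le 36$ when $p_g(F)=3$, with $K_F^2\ge d_F$ and $K_F^2\le 9\chi(\sO_F)$; together with $q(F)=0$ and $p_g(F)=3$ this forces $K_F^2\le 36$. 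The large-$p_g$ hypothesis, namely $p_g>243$, enters through the slope inequality $K_X^3\ge$ (linear in $p_g$) and through the fact that $h^0(K_X-F)$ large forces the base map to be essentially of degree $\le 2$. This gives $d\le 2\cdot 36=72$, with equality only if $K_F^2=36$, $p_g(F)=3$, $q(F)=0$ and $d_F=36$. The case $q(F)>0$ yields a strictly smaller bound via Beauville's sharper bound $d_F\le$ smaller constants.

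The main obstacle is making the constant $243$ work. This requires a careful inequality comparing $K_X^3$ against $p_g$ in the fibred case, something like $K_X^3\le \frac{?}{}\,\cdot$ Miyaoka--Yau versus $K_X^3\ge 2d_F\,(p_g-c)$, combined with the $(K_X|_F)^2\le K_F^2$ restrictions. I would first try to bound $h^0(K_X-2F)$ to show that the moving part restricted to $F$ is composed of several copies of $F$ ($\ge (p_g-1)/\text{const}$). The threshold $243$ should then come from solving the resulting linear inequality $d\,(p_g-c_1)\le c_2\,p_g+c_3$ for $d<73$.
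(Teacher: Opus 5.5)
There is a genuine gap, and it begins with the constant in your Miyaoka--Yau step. The bound $K_X^3\le 72\chi(\omega_X)$ is only Miyaoka's classical inequality $(3c_2-c_1^2)\cdot K_X\ge 0$. The argument needs the sharp $\mathbb{Q}$-Miyaoka--Yau inequality $(8c_2-3c_1^2)\cdot K_X\ge 0$ of Greb--Kebekus--Peternell--Taji, which together with $K_X\cdot c_2=24\chi(\omega_X)$ gives $K_X^3\le 64\chi(\omega_X)$.

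With $72$, the chain $d(p_g-3)\le K_X^3$ can never exclude $d=72$; for $q(X)=0$ it only reads $72(p_g-3)\le 72(p_g-1)$. That is why you are pushed to the unproven expectation $\deg W\ge 2p_g-6$. With $64$ and $\chi(\omega_X)\le p_g-1$ you get $d\le 64$ at once.

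The second missing input is a bound of $\chi(\omega_X)$ by $p_g(X)$ for irregular $X$. By Chen--Hacon, $\chi(\omega_X)\le p_g(X)$ unless the Albanese map is a fibration $f\colon X\to Y$ onto a curve of genus $b$ with regular general fibre $F$, in which case $\chi(\omega_X)\le p_g(X)(1+1/p_g(F))$. This disposes of Albanese dimension $2$ and $3$, which your fibration argument never reaches since there are no surface fibres there, and of irregular $F$, giving $d\le 64$. It also settles $p_g(F)\ge 9$, and the threshold $243$ is exactly the solution of $64\cdot\frac{10}{9}\,p_g<72(p_g-3)$. Finally, $\chi(\omega_X)=\chi(f_*\omega_X)+b-1\le p_g(X)+b-1$ settles $b\le 8$ numerically.

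Only the case $p_g(F)\le 8$, $b\ge 8$ requires geometry, and there neither of your two key steps is justified. First, the factorization $d=\deg(\phi_X|_F)\cdot\deg(\delta)$ with $\phi_X|_F=\phi_F$ and $\deg\phi_F\le 36$ presupposes that $H^0(K_X)\to H^0(K_F)$ is surjective. If $|K_X||_F$ is a proper subsystem of $|K_F|$, its degree is bounded only by $K_F^2$, not by $36$. The paper obtains surjectivity from Xiao's inequality $(b-1)(p_g(F)-r)\le r$ for the rank $r$ of the image $(f_*\omega_X)_{\ev}$ of the evaluation map: $r<p_g(F)$ would force $b\le p_g(F)\le 8$, which is the numerical case.

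Second, $\deg\delta\le 2$ does not follow from ``$h^0(K_X-F)$ large''. It is proved by Riemann--Roch on $Y$, taking a section $t$ of $f_*\omega_X$ whose divisor $D_t$ has maximal degree. If $\deg D_t>\frac32 b$, then $\phi_{|D_t|}$ has degree at most $2$ and factors through $\delta$. Otherwise, a nonzero section of $f_*\omega_X\otimes\sO_Y(-D)$, with $D$ a sum of $b-2$ general points, yields $\frac32 b\ge\deg(\delta)(b-2)$, hence $\deg\delta\le 2$ because $b\ge 8$. Then $d=\deg\phi_F\cdot\deg\delta\le 72$, and equality forces $\deg\phi_F=36$, hence $p_g(F)=3$, $q(F)=0$, $K_F^2=36$.
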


\begin{rmk}
\begin{itemize}
    \item[(1)]  When $p_g(X)$ is small, examples of smooth minimal threefolds of general type with canonical map of degree $96$ do exist (see \cite{FG20}).
    \item[(2)] If $X$ is not assumed to be Gorenstein minimal, Hacon \cite{Hac04} proved that the canonical degree $d$ is unbounded.
\end{itemize} 
\end{rmk}
 Under the same assumption as in Theorem \ref{thm:main}, we have
\begin{cor}\label{cor:main}
    If  $d>64$, then  the general Albanese fibre of $X$ is a regular surface.
\end{cor}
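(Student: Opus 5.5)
Corollary \ref{cor:main} will follow from the structural analysis behind Theorem \ref{thm:main}, so the plan is to reuse the Albanese-fibre reduction and argue by contradiction. Assume $d>64$ and that the general Albanese fibre $F$ has $q(F)>0$. Since $d>64>8$ and $p_g(X)$ may be taken large (as in Theorem \ref{thm:main}, where $p_g(X)>243$ is assumed), the canonical map cannot factor through a pencil of curves of low degree. Hence the Albanese map of $X$ has one-dimensional image, a curve $B$, with general fibre a surface $F$. The canonical map of $X$ restricted to $F$ is then governed by the restriction $|K_X|\,|_F\subseteq |K_F|$, so that $d = d_F\cdot \deg(\text{map on the base})$. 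More concretely, I will use the standard inequality $d\le K_X^3/(\deg \Sigma)$ together with the fibration bound $K_X^3 \lesssim 2\,K_F^2\,(\text{something in } p_g(X))$. After normalization by $\deg\Sigma\ge p_g(X)-2$, this gives $d \le 2\,(K_F^2)/(\deg\phi(F))\cdot d_F$ up to lower-order terms; this is the same mechanism that yields $72=2\cdot 36$ in the theorem.

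The key step is to bound the canonical degree of a surface $F$ with $q(F)>0$, where the canonical map is composed via the restricted system. Beauville's classical argument gives $\deg\phi_{K_F}\le 36$, with equality only if $p_g(F)=3$, $q(F)=0$ and $K_F^2=36$. This is exactly the equality case recorded in Theorem \ref{thm:main}. When $q(F)>0$ one has $\chi(\mathcal O_F)\le p_g(F)$, and Bogomolov--Miyaoka--Yau gives $K_F^2\le 9\chi(\mathcal O_F)\le 9(p_g(F)-q(F)+1)$. If the canonical image is a surface of degree at least $p_g(F)-2$, this yields
\[
\deg\phi_{K_F}\le \frac{K_F^2}{p_g(F)-2}\le \frac{9(p_g(F))}{p_g(F)-2}.
\]
The case $p_g(F)=3$ needs separate care; there $K_F^2\le 9\chi\le 27$ when $q\ge1$. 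I aim for the bound $\deg\phi_{K_F}\le 32$ for irregular $F$, that is, $\deg(\phi)\cdot(\text{image degree})\le K_F^2\le 27$ or $\le 9\chi$ with $\chi\le p_g$. Combined with the factor $2$ from the threefold-to-fibre comparison, this gives $d\le 64$, contradicting $d>64$.

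The main obstacle is making the comparison factor exactly $2$ and not larger when $q(F)>0$. The restricted system $|K_X|\,|_F$ may be a proper subsystem of $|K_F|$, so the image of $F$ could be a surface of smaller degree or even a curve. I would handle this exactly as in the proof of Theorem \ref{thm:main}: use $K_X|_F\ge$ a nef divisor $M_F$ with $M_F^2\le K_F^2$ and Noether-type inequalities for the moving part. Then I would check the few small cases ($p_g(F)\le 4$, image of low degree) by hand, using $K_F^2\le 9\chi(\mathcal O_F)$ and $\chi\le p_g-q+1\le p_g$ to keep each product at most $32$.
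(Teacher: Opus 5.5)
There is a genuine gap: the decisive steps of your plan are asserted rather than proved, and the idea that actually settles the irregular case is missing.

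First, nothing justifies the reduction to $\alb.\dim X=1$. The reason you give (the canonical map does not factor through a pencil) says nothing about $q(X)$ or the Albanese dimension. Yet the corollary must also exclude $q(X)=0$ and $\alb.\dim X\ge 2$, and you never treat these cases.

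Second, even for an Albanese fibration $f\colon X\to Y$ over a curve with $q(F)>0$, the decomposition $d=\deg(\phi_X|_F)\cdot\deg\delta$ with $\deg(\phi_X|_F)\le 27$ and $\deg\delta\le 2$ is not established. The restriction of $|K_X|$ to $F$ may be a proper subsystem of $|K_F|$ of small dimension $r$. Then the image of $F$ is a surface in $\PP^{r-1}$, and Beauville's estimate only gives $\deg(\phi_X|_F)\le K_F^2/(r-2)$. This is a priori unbounded, since $p_g(F)$ is. Your remedy via ``a nef $M_F$ and Noether-type inequalities'' does not supply a bound. The bound $\deg\delta\le 2$ is proved in the paper only for $b=q(X)\ge 8$, in the subcase where $H^0(K_X)\to H^0(K_F)$ is surjective. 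The complementary situations are handled there by the Miyaoka--Yau bound, not by a fibre argument. Your ``fibration bound $K_X^3\lesssim 2K_F^2(\cdots)$'' is not an actual inequality. Also, a nondegenerate threefold in $\PP^{p_g(X)-1}$ only satisfies $\deg\Sigma\ge p_g(X)-3$, not $p_g(X)-2$.

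In fact no fibrewise analysis is needed. Whenever the general Albanese fibre is not a regular surface, $\chi(\omega_X)\le p_g(X)$ by \cite[Proposition 2.1]{CH06}:
for $q(X)=0$ one has $\chi(\omega_X)=p_g(X)-h^2(\sO_X)-1$;
for $\alb.\dim X\ge 2$ it follows from generic vanishing and semicontinuity;
for a fibration over a curve of genus $b$ with $q(F)>0$ it follows from $\chi(\omega_X)=\chi(f_*\omega_X)-\chi(R^1f_*\omega_X)+(b-1)$, together with $\chi(f_*\omega_X)\le p_g(X)$ and $\chi(R^1f_*\omega_X)\ge q(F)(b-1)$ (by nefness of $R^1f_*\omega_{X/Y}$).
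Combine this with the threefold Miyaoka--Yau inequality $K_X^3\le 64\chi(\omega_X)$, $d\deg\Sigma\le K_X^3$ and $\deg\Sigma\ge p_g(X)-3$. This gives
\[
d\le\frac{64\,p_g(X)}{p_g(X)-3}<65
\]
once $p_g(X)>195$, so $d\le 64$. That is the paper's whole proof (Proposition~\ref{keyprop1}). The $72=2\cdot 36$ mechanism you are imitating is needed only when $F$ is regular, where $\chi(\omega_X)$ can exceed $p_g(X)$.
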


\subsection{ Notation and Conventions.}

Throughout the paper, we work over the complex number field $\CC$. We follow the standard definitions and notation in \cite{KM98}.
All varieties are assumed to be projective. 

\subsubsection*{Varities and divisors}
For a normal variety $X$, we denote by $K_X$ for the canonical divisor.  We write $D\ge 0$ to indicate that $D$ is an effective divisor, and denote by $|M|$ the linear system associated to a divisor $M$ on $X$.
For two $\mathbb{Q}$-divisors $D_1, D_2$, we write $D_1 \ge D_2$ if $D_1-D_2$ is effective. 
\subsubsection*{Irregular varieties} Let $X$ be a normal variety with at worst rational singularities. We say that $X$ is \emph{irregular}, if $q(X) := h^1(X, \mathcal{O}_X) > 0$. Note that $X$ has a well-defined \emph{Albanese map}
$$
a: X \to \mathrm{Alb}(X),
$$
where $A:=\mathrm{Alb}(X)$ is an abelian variety referred to as the Albanese variety of $X$. The number $\dim a(X)$, denoted by $\alb.\dim X$, is called the \emph{Albanese dimension} of $X$. Let 
$$
X \stackrel{f}\rightarrow Y \stackrel{h}\rightarrow A
$$ 
be the Stein factorization of $a$. Then $f$ is called the \emph{Albanese fibration} of $X$, and a fibre of $f$ is called an \emph{Albanese fibre}.



\subsection*{Acknowledgments}
The authors thank  Shanghai Institute for Mathematics and Interdisciplinary Sciences (SIMIS) for providing a stimulating research environment for discussions of this work. 
Yong Hu is supported by National Key Research and Development Program of China \#2023YFA1010600 and the National Natural Science Foundation of China (Grant No. 12571044).

\section{Preliminaries}

\subsection{Image sheaf of the evaluation map}
Let $C$ be a smooth  curve of genus $g$, and let $E$ be a vector bundle of rank $r$ on $C$. Consider the evaluation map
\[
\ev\colon H^0(E)\otimes\sO_C\to E
\]
Denote by $E_{\ev}$ the image sheaf of $\ev$, which is a vector bundle, say of rank $l$.
\begin{thm}\cite[Proposition~page 476]{Xia86}\label{Xiao_Eev}
    Suppose that $E\otimes\omega_C$ is nef, then
    \[
    (g-1)(r-l)\leq l.
    \]
    Moreover, when the equality holds, $E$ is a semi-stable bundle of degree $(2g-2)r$.
\end{thm}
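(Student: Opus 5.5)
Write $V=H^0(E)$, so that $E_{\ev}$ is generated by global sections and $h^0(E_{\ev})=h^0(E)=\dim V$, since the inclusion $E_{\ev}\subset E$ induces an isomorphism on global sections. Set $Q=E/E_{\ev}$; it has rank $r-l$. The plan is to estimate $h^0(E)$ in two ways and compare: Riemann--Roch bounds it from below, and the fact that $E_{\ev}$ is globally generated of rank $l$ bounds it from above.

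\emph{Lower bound.} Since $E\otimes\omega_C$ is nef, every quotient bundle $Q'$ of $E$ satisfies $\deg Q'\ge -(2g-2)\operatorname{rk}Q'$. Apply this to the torsion-free part of $Q$: we get $\deg Q\ge -(2g-2)(r-l)$, hence
\[
\deg E_{\ev}=\deg E-\deg Q\le \deg E+(2g-2)(r-l).
\]
In the other direction, Riemann--Roch gives $h^0(E)\ge \deg E+r(1-g)$. The real content is that we need an upper bound for $h^0(E_{\ev})$ in terms of $\deg E_{\ev}$ and $l$.

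\emph{Upper bound.} A globally generated bundle $G$ of rank $l$ with $G^\vee$ having no sections admits a Clifford-type inequality
\[
h^0(G)\le \frac{\deg G}{2}+l,
\]
which follows from the Clifford theorem for vector bundles. More basically, choose $l-1$ general sections; they give an exact sequence $0\to\sO^{l-1}\to G\to L\to 0$ with $L=\det G$ a line bundle, so that $h^0(G)\le l-1+h^0(L)$. Then bound $h^0(L)$ by Riemann--Roch or Clifford, depending on $\deg L$.

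\emph{Combining.} Put the pieces together:
\[
\deg E+r(1-g)\le h^0(E)\le l-1+h^0(\det E_{\ev}),
\]
and use $\deg E_{\ev}\le\deg E+(2g-2)(r-l)$. The expected outcome is $(g-1)(r-l)\le l$ after rearranging. I expect the main obstacle to be making this bookkeeping exact. The naive estimates lose constants, so the argument likely has to proceed differently: dualize, so that $E_{\ev}^\vee\hookrightarrow V^\vee\otimes\sO$, and use $h^1(E)=h^0(E^\vee\otimes\omega_C)$, whose vanishing or smallness comes from nefness. Concretely, I would consider
\[
0\to E_{\ev}\to E\to Q\to 0.
\]
Since $H^0(E_{\ev})=H^0(E)$, the connecting map $H^0(Q)\hookrightarrow H^1(E_{\ev})$ is injective. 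Next, $H^1(E_{\ev})^\vee=H^0(E_{\ev}^\vee\otimes\omega)$, and $E_{\ev}^\vee\subset V^\vee\otimes\sO$, which gives control of this space. Meanwhile $h^0(Q\otimes\omega)$ is large because $Q\otimes\omega$ is nef of rank $r-l$; roughly $(g-1)(r-l)$ sections arise from the $\omega$ factor, and they must inject into $H^0$ of $E_{\ev}^\vee\otimes\omega$ modulo trivial pieces, which has size about $l$. Matching these two counts is the delicate step.

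\emph{Equality case.} If equality holds, every inequality in the chain must be sharp. In particular, $\deg Q=-(2g-2)(r-l)$ and the estimates attain their bounds. From this I would deduce $\deg E=(2g-2)r$ and that no subbundle of $E$ has slope exceeding $2g-2$, i.e., $E$ is semistable. The semistability itself would follow from the nefness of $E\otimes\omega_C$, which forces every quotient to have slope at least $-(2g-2)$, combined with total slope $2g-2$.
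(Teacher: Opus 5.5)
There is a genuine gap, and it starts with the hypothesis you took at face value. As printed (``$E\otimes\omega_C$ nef''), the statement is false. Take $E=\sO_C\oplus\omega_C^{-1}$. Then $E\otimes\omega_C=\omega_C\oplus\sO_C$ is nef and $E_{\ev}=\sO_C$, so $r=2$, $l=1$, and $(g-1)(r-l)=g-1>l$ once $g\ge 3$. For $g=2$ equality holds, yet $\deg E=-2\neq(2g-2)r$ and $E$ is not semistable.

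So no bookkeeping can close your chain. With only $\deg E\ge-(2g-2)r$ available, your Riemann--Roch lower bound $h^0(E)\ge\deg E+r(1-g)$ is vacuous. Your equality paragraph cannot work either: quotient slopes $\ge-(2g-2)$ together with average slope $2g-2$ do not give semistability. The intended hypothesis is that $N:=E\otimes\omega_C^{-1}$ is nef. This is what holds in the paper's application, where $E=f_*\omega_X=f_*\omega_{X/Y}\otimes\omega_Y$ with $f_*\omega_{X/Y}$ nef. The paper itself gives no proof and only cites Xiao.

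With that correction your outline is close. Riemann--Roch gives $h^0(E)\ge\deg N+(g-1)r$, nefness of $N$ gives $\deg E_{\ev}\le\deg N+(2g-2)l$, and a bound $h^0(E_{\ev})\le\frac12\deg E_{\ev}+l$ would finish. But that Clifford-type bound is false for globally generated bundles: a line bundle of degree $d\ge 2g+1$ has $h^0=d+1-g>\frac d2+1$, and such $E_{\ev}$ occur here whenever $\deg N$ is large. Your fallback via $l-1$ general sections only gives $h^0(E_{\ev})\le\deg E_{\ev}+l$, hence only $(g-1)(r-2l)\le l$. The dual sketch has no estimate behind it: $Q\otimes\omega_C$ nef does not force sections, and $h^1$ of a globally generated bundle such as $\sO_C^{l}$ is $gl$.

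The missing idea is to treat the Harder--Narasimhan factors $G_i$ of $G=E_{\ev}$ separately. Let $G_i$ have rank $l_i$, degree $d_i$ and slope $\mu_i$; all $\mu_i\ge0$ since $G$ is globally generated.
\begin{itemize}
\item If $\mu_i>2g-2$, then $h^1(G_i)=0$ and $h^0(G_i)=d_i-(g-1)l_i$.
\item If $\mu_i\le 2g-2$, then $h^0(G_i)\le gl_i$. This follows from Riemann--Roch and $h^0(F)\le\deg F+\operatorname{rk}F$ for the semistable bundle $F=G_i^\vee\otimes\omega_C$ of slope $\ge0$.
\end{itemize}
The excess from the high-slope factors is paid for by nefness. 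The HN subsheaf $G_+$ they span satisfies $G_+\otimes\omega_C^{-1}\subset N$, and the quotient of $N$ by the saturation of $G_+\otimes\omega_C^{-1}$ is nef. Hence $\deg N\ge D_+:=\sum_{\mu_i>2g-2}(d_i-(2g-2)l_i)$. Summing,
\[
\deg N+(g-1)r\le h^0(E)=h^0(G)\le D_++gl-\sum_{\mu_i>2g-2}l_i ,
\]
which gives $(g-1)(r-l)\le l$. Equality forces no high-slope factors and $\deg N=D_+=0$. Then $N$ is nef of degree $0$, hence semistable, and so $E$ is semistable of degree $(2g-2)r$.
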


\subsection{Miyaoka-Yau inequality}

For minimal varieties of general type, Greb, Kebekus, Peternell and Taji (see \cite[Theorem 1.1]{GKPT19} and \cite[line 12 in \S 1.1, page 1488]{GKPT19}) proved the following $\QQ$-Miyaoka-Yau inequality:

\begin{thm}[$\mathbb{Q}$-Miyaoka-Yau inequality]\label{thm: MY1}
Let $X$ be an $n$-dimensional, minimal variety of general type. Then,
\begin{equation*}
\bigl(2(n+1)\cdot c_2(X) - n\cdot c_1(X)^2\bigr)\cdot [K_X]^{n-2} \geq 0.
\end{equation*}
\end{thm}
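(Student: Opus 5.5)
The statement is \cite[Theorem~1.1]{GKPT19}, and we use it as a black box. If we had to prove it ourselves, we would follow the Higgs-sheaf strategy sketched below.

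\textbf{Reduction to the canonical model.} Let $X_{\mathrm{can}}$ be the canonical model of $X$ and $\pi\colon X\to X_{\mathrm{can}}$ the crepant morphism, so that $K_X=\pi^*K_{X_{\mathrm{can}}}$. The $\QQ$-Chern class $c_2$ is defined on klt spaces through their quotient singularities in codimension two. The first step is to check that $c_1^2\cdot K^{n-2}$ and $c_2\cdot K^{n-2}$ are unchanged under $\pi$. We expect this because a general complete intersection of members of $|mK_X|$ avoids the $\pi$-exceptional locus in the relevant codimension. We may therefore assume $K_X$ is ample and $X$ is klt.

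\textbf{The Higgs sheaf and its stability.} Consider the reflexive Higgs sheaf
\[
\mathcal{E}=\Omega_X^{[1]}\oplus\sO_X,\qquad \theta\colon \mathcal{E}\to\mathcal{E}\otimes\Omega_X^{[1]},
\]
where $\theta$ sends the factor $\sO_X$ to $\Omega^{[1]}_X\otimes\Omega^{[1]}_X$ via the identity element of $\mathrm{End}(\Omega^{[1]}_X)\cong\Omega^{[1]}_X\otimes T_X$, composed with $T_X\to\Omega^{[1]}_X$ if needed; it is the standard uniformization Higgs field of rank $n+1$. This sheaf has
\[
c_1(\mathcal{E})=K_X,\qquad c_2(\mathcal{E})=c_2(X).
\]
The second step is to prove that $(\mathcal{E},\theta)$ is $K_X$-semistable. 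For this we use the singular Kähler--Einstein metric on $X$, or equivalently Guenancia's semistability of $T_X$ with respect to $K_X$. Any $\theta$-invariant subsheaf must contain the $\Omega^{[1]}$-part generated by $\theta(\sO_X)$. A slope comparison then reduces semistability of $\mathcal{E}$ to semistability of $\Omega^{[1]}_X$.

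\textbf{Restriction to a surface and Bogomolov--Gieseker.} Cut $X$ by $n-2$ general members of $|mK_X|$ with $m\gg0$ to obtain a surface $S$. Since $X$ has only quotient singularities in codimension two, $S$ has only quotient singularities and carries a natural orbifold structure. On this orbifold, $\mathcal{E}|_S$ becomes a genuine orbifold Higgs bundle. A Mehta--Ramanathan type restriction theorem for Higgs sheaves keeps it semistable. Simpson's Bogomolov--Gieseker inequality for semistable Higgs bundles, in its orbifold version, then gives
\[
\Bigl(2c_2(\mathcal{E})-\tfrac{r-1}{r}c_1(\mathcal{E})^2\Bigr)\cdot S\ge 0,\qquad r=n+1.
\]
Substituting $c_1(\mathcal{E})=K_X$, $c_2(\mathcal{E})=c_2(X)$ and $S\equiv m^{n-2}K_X^{n-2}$, and multiplying by $n+1$, yields
\[
\bigl(2(n+1)c_2(X)-n\,c_1(X)^2\bigr)\cdot K_X^{n-2}\ge0,
\]
which is the claimed inequality.

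\textbf{Expected obstacle.} The main difficulty is the second step. One must establish semistability of the Higgs sheaf on a singular space and make restriction to $S$ compatible with the orbifold Chern classes. This is where the analytic input (Kähler--Einstein metrics on klt varieties) and the orbifold Bogomolov--Gieseker theory are essential.
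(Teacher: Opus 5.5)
Your treatment matches the paper's: the paper gives no proof of this statement and simply cites \cite[Theorem~1.1]{GKPT19}, so invoking it as a black box is all that is needed. The one flaw is in your optional sketch, where the Higgs field is wrong. As you describe it, $\theta$ is not well defined, since there is no natural map $T_X\to\Omega^{[1]}_X$. Moreover, if $\theta$ vanished on the summand $\Omega^{[1]}_X$, that summand would itself be $\theta$-invariant with slope $K_X^n/n>K_X^n/(n+1)=\mu(\mathcal{E})$, so $\mathcal{E}$ would be unstable and your claim about invariant subsheaves would fail. The correct canonical Higgs field is the identity $\Omega^{[1]}_X\to\sO_X\otimes\Omega^{[1]}_X$ (dually, $1\mapsto\mathrm{id}\in T_X\otimes\Omega^{[1]}_X$ on $T_X\oplus\sO_X$), and with it your slope comparison goes through.
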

In particular, we have
\begin{cor}
    Let $X$ be a Gorenstein minimal $3$-fold of general type, then 
    \begin{equation}\label{MY}
        K_X^3\leq 64\chi(\omega_X).
    \end{equation}
   
\end{cor}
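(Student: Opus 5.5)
The inequality $K_X^3\le 64\chi(\omega_X)$ follows by specializing the $\QQ$-Miyaoka--Yau inequality of Theorem \ref{thm: MY1} to $n=3$ and combining it with Riemann--Roch for Gorenstein threefolds. For $n=3$ the theorem gives
\[
\bigl(8c_2(X)-3c_1(X)^2\bigr)\cdot K_X\ge 0 .
\]
Since $c_1(X)=-K_X$, we have $c_1(X)^2\cdot K_X=K_X^3$, and the inequality becomes $3K_X^3\le 8\,c_2(X)\cdot K_X$.

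Next we express $c_2(X)\cdot K_X$ through $\chi(\omega_X)$. Because $X$ is Gorenstein minimal, it has at worst canonical Gorenstein singularities, and these are rational. Two facts are then available.

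\begin{itemize}
\item[(1)] The Riemann--Roch formula for Gorenstein (canonical) threefolds holds: $\chi(\sO_X)=-\tfrac{1}{24}K_X\cdot c_2(X)$. In Reid's plurigenus formula for canonical threefolds the correction terms from baskets of singularities vanish in the Gorenstein case.
\item[(2)] Serre duality gives $\chi(\omega_X)=-\chi(\sO_X)$.
\end{itemize}

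Together these yield $K_X\cdot c_2(X)=24\chi(\omega_X)$. Substituting into the inequality above,
\[
3K_X^3\le 8\cdot 24\,\chi(\omega_X)=192\,\chi(\omega_X),
\]
which is exactly $K_X^3\le 64\chi(\omega_X)$.

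The one point needing care is making sense of $c_2(X)\cdot K_X$ on a singular $X$ and checking that the $c_2$ in the theorem of Greb--Kebekus--Peternell--Taji agrees with the one in Riemann--Roch. Both are defined as the $\QQ$-Chern class (equivalently, computed on a resolution or a terminalization) paired with the Cartier divisor $K_X$. The pairing can be computed on a crepant partial resolution $Y\to X$ with $Y$ terminal Gorenstein, whose singularities are isolated, so it is unaffected by them. Since $K_Y$ is the pullback of $K_X$ and $\chi$ is a birational invariant for rational singularities, the identity $K_X\cdot c_2(X)=24\chi(\omega_X)$ transfers to $X$.
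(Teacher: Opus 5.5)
Your proposal is correct and matches the paper's proof: specialize the Greb--Kebekus--Peternell--Taji inequality to $n=3$ to get $8\,K_X\cdot c_2(X)\ge 3K_X^3$, then substitute $K_X\cdot c_2(X)=24\chi(\omega_X)$ from Reid's Riemann--Roch for Gorenstein threefolds (the paper cites Reid's Corollary 10.3). Your extra paragraph on identifying the two $c_2$'s is harmless, since Gorenstein minimal means terminal and the singularities are already isolated; for a merely canonical $X$, the orbifold $c_2$ of Greb--Kebekus--Peternell--Taji need not agree with the $c_2$ of a crepant terminalization $Y$ along curves of singularities, and the clean fix there is to apply the inequality to $Y$ directly.
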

\begin{proof}
    By Theorem \ref{thm: MY1}, we have 
    $$
    8(K_X\cdot c_2(X))\geq 3K_X^3.
    $$
    By \cite[Corollary~10.3]{Rei87}, we have $(K_X\cdot c_2(X))=24\chi(\omega_X)$. It follows easily that 
    $$
    K_X^3\leq 64\chi(\omega_X).
    $$
    The proof is completed.
\end{proof}

\section{Proofs of~Theorem \ref{thm:main} and Corollary~\ref{cor:main}} 
Let $X$ be a Gorenstein minimal  threefold of general type. Suppose that $|K_X|$ defines a generically finite map $\phi_X\colon X\dashrightarrow \Sigma\subset\PP^{p_g(X)-1}$, where $\Sigma=\im(\phi_X)$.  Let $d$ be the generic degree of $\phi_X$,
by the Miyaoka-Yau inequality (\ref{MY}), we have
\[
d\deg(\Sigma)\leq K_X^3\leq 64\chi(\omega_X).
\]
Since $\Sigma$ is nondegenerate, $\deg(\Sigma)\geq p_g(X)-3$. It follows that
\begin{equation}\label{MY-bound}
    d\leq \frac{K_X^3}{\deg(\Sigma)}\leq \frac{K_X^3}{p_g(X)-3}\leq \frac{64\chi(\omega_X)}{p_g(X)-3}.
\end{equation}

The proofs of Theorem~\ref{thm:main} and Corollary~\ref{cor:main} follow directly from  two propositions below.

\begin{prop}\label{keyprop1}
Suppose that $p_g(X)>243$. Then we have $d\leq 64$, unless the general Albanese fibre of $X$ is a regular surface.
\end{prop}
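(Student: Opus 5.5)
The plan is to make the bound (\ref{MY-bound}) strict below $65$ by controlling $\chi(\omega_X)$ in terms of $p_g(X)$. Write
\[
\chi(\omega_X)=p_g(X)-h^2(\sO_X)+q(X)-1 ,
\]
using $h^1(\omega_X)=h^2(\sO_X)$ and $h^2(\omega_X)=h^1(\sO_X)=q(X)$. Since $d$ is an integer, it suffices to prove $64\chi(\omega_X)<65(p_g(X)-3)$. For this it is enough to show $\chi(\omega_X)\le p_g(X)$ (or even $\le p_g(X)+c$ for a small constant $c$) once $p_g(X)$ is large. Indeed, $64p_g/(p_g-3)<65$ as soon as $p_g>195$, which is implied by $p_g>243$.

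The whole task is therefore to show $h^2(\sO_X)\ge q(X)-1$ in every case except when $X$ has Albanese dimension one with regular general Albanese fibre. I will split by Albanese dimension.

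\textbf{Case $q(X)=0$.} Then $\chi(\omega_X)\le p_g-1$ and we are done immediately.

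\textbf{Case $\alb.\dim X\ge 2$.} I consider wedge products of pulled-back holomorphic $1$-forms, which land in $H^0(\Omega^2_X)$. If $X$ admits no fibration onto a curve of genus $\ge 2$, the Castelnuovo--de Franchis lemma shows that the image of $\wedge^2H^0(\Omega^1_X)\to H^0(\Omega^2_X)$ has dimension at least $2q-3$. This gives $h^2(\sO_X)=h^0(\Omega^2_X)\ge 2q-3\ge q-1$ whenever $q\ge 2$ (and $q\ge 2$ holds here). If $X$ does fibre over a curve $B$ of genus $b\ge2$, I run the fibration argument of the next case with $B$ in place of the Albanese curve. There the fibre is now irregular automatically, since the remaining $q-b$ one-forms restrict nontrivially, or else $q=b$ and the next case applies verbatim.

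\textbf{Case $\alb.\dim X=1$.} Here $f\colon X\to C$ is the Albanese fibration onto a curve of genus $g=q(X)$, and the general fibre $F$ is a surface. Assume $q(F)>0$; the goal is then $d\le 64$. I use the Leray spectral sequence for $\sO_X$. It gives an injection $H^1(C,R^1f_*\sO_X)\hookrightarrow H^2(\sO_X)$, since the $E_2^{2,0}$ term vanishes on a curve and the edge maps behave well. In fact the decomposition theorem of Koll\'ar gives splitting, so $h^2(\sO_X)\ge h^1(C,R^1f_*\sO_X)$.

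By relative duality $R^1f_*\sO_X\cong (R^1f_*\omega_{X/C})^\vee$, so
\[
h^1(C,R^1f_*\sO_X)=h^0\bigl(C,R^1f_*\omega_{X/C}\otimes\omega_C\bigr).
\]
The bundle $R^1f_*\omega_{X/C}$ has rank $q(F)$ and is nef by Koll\'ar/Fujita. Riemann--Roch, together with $h^1\ge0$ and nonnegative degree, yields $h^0\ge q(F)(g-1)+\deg\ge g-1$. Hence $h^2(\sO_X)\ge g-1=q-1$, so $\chi(\omega_X)\le p_g$, and we conclude as above.

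\textbf{Expected main obstacle.} I expect the hardest step to be the careful verification of these Hodge-theoretic inequalities in the Gorenstein, possibly singular setting. Specifically, rational singularities let us pass to a resolution without changing $h^i(\sO)$, and one needs the edge injection or splitting from Koll\'ar's theorem. The sub-case of $\alb.\dim X\ge2$ with an irrational pencil also needs care.

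If the Riemann--Roch estimate turns out to be too weak near the threshold, I would instead apply Theorem~\ref{Xiao_Eev} to $E=R^1f_*\omega_{X/C}$ (or to $f_*\omega_{X/C}$). This bounds the rank $l$ of the globally generated part, and I would feed that into the count of $h^0$. Presumably this refinement is what produces the precise constant $243$.
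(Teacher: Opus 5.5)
Your overall reduction is the paper's: it shows $\chi(\omega_X)\le p_g(X)$, i.e.\ $h^2(\sO_X)\ge q(X)-1$, and then does the arithmetic. The paper gets the inequality by citing \cite[Proposition 2.1]{CH06}, whereas you try to prove it yourself. Your argument is fine for $q=0$, for $\alb.\dim X\ge 2$ without irrational pencils (Castelnuovo--de Franchis plus the dimension count of the Grassmann cone), and for $\alb.\dim X=1$ with $q(F)>0$. As you note, the arithmetic only needs $p_g>195$. The constant $243$ does not come from any refinement via Theorem~\ref{Xiao_Eev}. It is forced by Case~1 of Proposition~\ref{keyprop2}, since $\frac{640}{9}p_g<72(p_g-3)$ if and only if $p_g>243$.

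The gap is the subcase $\alb.\dim X\ge 2$ with a fibration $g\colon \tilde X\to B$ from a resolution onto a curve of genus $b\ge 2$. Here $b<q$ necessarily: if $b=q$ the Albanese map would factor through $B$, so your alternative ``$q=b$'' cannot occur. Let $E=R^1g_*\omega_{\tilde X/B}$ and let $G$ be the general fibre. Running your Albanese-curve argument verbatim only gives
\[
h^2(\sO_X)\ge h^0(B,E\otimes\omega_B)\ge \deg E+(b-1)q(G)\ge (b-1)(q-b).
\]
This is smaller than $q-1$ whenever $b=2$ or $q-b=1$. For example, take $X=C\times S$ with $g(C)=2$ and $q(S)=1$, so $E$ is trivial of rank $1$ and $q=3$. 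Your bound gives $h^2\ge 1$, but you need $h^2\ge 2$.

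What you lose is $h^1(B,E\otimes\omega_B)$, which is not small here. By Serre duality it equals $h^0(B,R^1g_*\sO_{\tilde X})$. The Leray spectral sequence for $H^1(\sO_{\tilde X})$ degenerates because $E_2^{2,0}=0$, so $q=b+h^0(B,R^1g_*\sO_{\tilde X})$. Hence
\[
h^0(B,E\otimes\omega_B)=\deg E+(b-1)q(G)+(q-b)\ge q-1
\]
as soon as $q(G)\ge 1$, which holds because $q(G)\ge q-b\ge 1$. This also covers the Albanese case uniformly, where $q-b=0$.

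Alternatively, let $V'$ be a complement of $g^*H^0(\omega_B)$ in $H^0(\Omega^1_{\tilde X})$ and apply Hopf's lemma to the wedge pairing $g^*H^0(\omega_B)\times V'\to H^0(\Omega^2_{\tilde X})$. Castelnuovo--de Franchis, together with the fact that $\ker(g^*\alpha)$ cuts out the fibres of $g$, shows this pairing has no nontrivial zeros. Its image therefore spans a space of dimension at least $b+(q-b)-1=q-1$.

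With either repair your proof is complete. Otherwise, simply cite \cite[Proposition 2.1]{CH06} as the paper does.
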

\begin{proof}
By \cite[Proposition 2.1]{CH06} and our assumption, we have $\chi(\omega_X)\leq p_g(X)$.
Since $p_g(X)>243$, by \eqref{MY-bound}, we have
\[
d\leq \frac{64\chi(\omega_X)}{p_g(X)-3}\leq \frac{64p_g(X)}{p_g(X)-3}< 65.
\]
The proof is completed.

\end{proof}
By adopting the idea from the proof of \cite[Theorem 5]{Xia86}, we obtain the following proposition.
\begin{prop}\label{keyprop2}
    Suppose that $p_g(X)>243$ and the general Albanese fibre $F$ of $X$ is regular. Then we have $d\leq 72$. In the case $d=72$, the general fibre $F$ is a smooth  minimal surface of general type with $p_g(F)=3,q(F)=0$ and $K_F^2=36$ whose canonical degree is $36$.
\end{prop}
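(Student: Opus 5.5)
The plan is to follow the proof of \cite[Theorem 5]{Xia86}, reducing the threefold problem to the restriction of $|K_X|$ to a general Albanese fibre. First I dispose of the degenerate cases. If $\alb.\dim X\ge 2$, or if $q(X)=0$, then $\chi(\omega_X)$ is small compared with $p_g(X)$: for instance, for $q(X)=0$ one has $\chi(\omega_X)=p_g(X)-h^2(\sO_X)-1\le p_g(X)$. In these cases the estimate in the proof of Proposition~\ref{keyprop1} already gives $d\le 64$. So we may assume $f\colon X\to C$ is the Albanese fibration onto a smooth curve $C$ of genus $g=q(X)\ge 1$, with general fibre $F$ a smooth minimal surface of general type and $q(F)=0$. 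Since $F$ is regular, $q(X)=g$ and $h^2(\sO_X)$ is controlled by $g$ together with $h^1$ of the rank-$p_g(F)$ bundle $f_*\omega_{X/C}$. This gives an expression for $\chi(\omega_X)$ in terms of $p_g(X)$, $g$, $p_g(F)$ and $\deg f_*\omega_{X/C}$.

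Next I set $E=f_*\omega_X$, a bundle of rank $r=p_g(F)$ with $H^0(E)=H^0(K_X)$. Since $f_*\omega_{X/C}$ is nef, a suitable twist makes Theorem~\ref{Xiao_Eev} applicable. Let $l$ be the rank of $E_{\ev}$. The restriction $|K_X|\big|_F$ is then a linear subsystem of $|K_F|$ of projective dimension $l-1$. Because $\phi_X$ is generically finite, this restricted map $F\dashrightarrow\PP^{l-1}$ is generically finite, so $l\ge3$. Theorem~\ref{Xiao_Eev} gives $(g-1)(r-l)\le l$ when $g\ge2$. When $g=1$ I instead use the global sections count directly: $p_g(X)=h^0(E)$ together with $p_g(X)>243$ forces $\deg E$ to be large relative to $r$.

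The core estimate then bounds $d$ by the fibre data. The images $\phi_X(F_t)$ are surfaces $\Sigma_t\subset\Sigma$ sweeping out the threefold $\Sigma$, so $d$ equals the degree of $\phi_X|_F\colon F\dashrightarrow\Sigma_t$ times the number of fibres through a general point of $\Sigma$ mapping onto a common $\Sigma_t$. The first factor is at most $K_F^2/(l-2)$. Using the Bogomolov--Miyaoka--Yau bound $K_F^2\le 9\chi(\sO_F)=9(1+p_g(F))$ for the regular surface $F$, together with $l\ge 3$ and the inequality from Theorem~\ref{Xiao_Eev} relating $l$ to $r=p_g(F)$, the fibre degree is maximised when $l=r=3$ and $K_F^2=36$, giving fibre degree at most $36$. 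To control the second factor, I compare with the global bound \eqref{MY-bound}. Writing $K_X^3$ via $K_X|_F=K_F$ and the degree of the $C$-direction, and using $p_g(X)>243$ to absorb error terms, I aim to show that this factor is at most $2$, and equals $2$ only in the extremal situation. I will also check that whenever $r>l$ or $r\ge4$, the same combination yields $d<72$; this is where the numerical threshold $243$ should enter.

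Finally, the equality analysis: $d=72$ forces every inequality above to be sharp. This gives $l=r=p_g(F)=3$, $K_F^2=36=9\chi(\sO_F)$, and $\phi_X|_F=\phi_F$ of degree $36$, which is the stated characterisation. The main obstacle is the third step: making precise the passage from the fibre degree to $d$, that is, bounding the multiplicity with which the family $\{\Sigma_t\}$ covers $\Sigma$ by a factor of $2$ using $p_g(X)>243$. My first attempt there would be to bound $\deg\Sigma$ from below by $\deg\Sigma_t$ times the degree of the curve parametrising the $\Sigma_t$, mimicking Xiao's surface argument with curves replaced by surfaces.
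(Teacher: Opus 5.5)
The gap is your third step, which you flag yourself. You never bound the multiplicity factor, i.e.\ the degree of the map $\delta\colon Y\to C$ onto the curve parametrising the surfaces $\Sigma_t$. Without that bound the statement is not proved.

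In the essential case ($p_g(F)\le 8$, $l=r$, $q(X)$ large), the global bound \eqref{MY-bound} is not enough. Even with $\chi(\omega_X)=\chi(f_*\omega_X)+q(X)-1\le p_g(X)(1+1/p_g(F))$, it only gives about $d\le 85$ for $p_g(F)=3$ and $d\le 80$ for $p_g(F)=4$. That does not exclude, say, fibre degree $28$ with multiplicity $3$. So your claim that "$r\ge 4$ gives $d<72$ by the same combination" also depends on the missing step.

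The route you sketch cannot supply it:
\begin{itemize}
\item A lower bound for $\deg\Sigma$ carries no information on $\deg(\delta)$. For $p_g(F)=3$ the $\Sigma_t$ are planes, and a scroll of planes over $\PP^1$ of minimal degree $p_g(X)-3$ is compatible with any $\deg(\delta)$.
\item Feeding $K_X\ge k\deg(\delta)F$, with $k\approx p_g(X)/p_g(F)$, into the Miyaoka--Yau inequality only gives $\deg(\delta)K_F^2\lesssim 64(p_g(F)+1)$, which is useless.
\end{itemize}

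Separately, your claim that the fibre degree $K_F^2/(l-2)$ is maximised at $l=r=3$ is false when $l<r$ and $g$ is small. For example, $g=2$, $l=3$, $r=6$ is allowed by Theorem~\ref{Xiao_Eev} and gives $63$. The case $l<r$ must be handled globally, as the paper does. Theorem~\ref{Xiao_Eev} gives $g\le r$, hence $\chi(\omega_X)\le p_g(X)+7$ once $p_g(F)\le 8$, and \eqref{MY-bound} then gives $d<67$.

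The missing idea is a Clifford-type argument on the curve $Y$ itself. The paper first disposes of two easy cases:
\begin{itemize}
\item $p_g(F)\ge 9$, via $\chi(\omega_X)\le p_g(X)(1+1/p_g(F))$. This is exactly where $243$ enters: $\frac{640}{9}p<72(p-3)$ if and only if $p>243$.
\item $b=q(X)\le 7$, where $\chi(\omega_X)\le p_g(X)+6$.
\end{itemize}
For $b\ge 8$ and $l=r$, one has $d=\deg(\phi_F)\deg(\delta)$. Pick $t\in H^0(f_*\omega_X)$ whose zero divisor $D_t$ on $Y$ has maximal degree. Since $f^*|D_t|$ is (up to a fixed divisor) a subsystem of $|K_X|$, the map $\phi_{|D_t|}$ factors through $\delta$. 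There are then two cases.
\begin{itemize}
\item If $\deg D_t>\frac32 b$, Riemann--Roch gives $h^0(D_t)>\frac13\deg D_t+1$, which forces $\deg\phi_{|D_t|}\le 2$.
\item Otherwise, because $\deg f_*\omega_X\ge p_g(F)(2b-2)$, Riemann--Roch produces a nonzero section of $f_*\omega_X$ vanishing at $b-2$ general points. Its zero divisor contains the whole $\delta$-fibres through those points. Hence $\frac32 b\ge\deg D_t\ge\deg(\delta)(b-2)$, and $b\ge 8$ gives $\deg(\delta)\le 2$.
\end{itemize}
Combined with $\deg\phi_F\le 36$, this yields $d\le 72$, and the equality case then follows as you describe.
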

\begin{proof}
Note that the Albanese fibration $f: X\to Y$ is a fibration onto a smooth curve $Y$ of genus $b=q(X)$. By \cite[Proposition 2.1]{CH06}, we have
\begin{align}\label{ineq:pg}
\chi(\omega_X)\leq p_g(X)(1+\frac{1}{p_g(F)}).
\end{align}
Remark that we have $$
p_g(X)=h^0(f_{\ast}\omega_X)\geq \chi(f_{\ast}\omega_X)\geq p_g(F)(q(X)-1).
$$
We proceed with the proof by three cases:

{\bf Case 1.}  $p_g(F)\geq 9$. 

We deduce that
\[
d\leq \frac{64\chi(\omega_X)}{p_g(X)-3}\leq \frac{64p_g(X)(1+\frac{1}{p_g(F)})}{p_g(X)-3}< 72.
\]

{\bf Case 2.} $p_g(F)\leq 8$ and $b=q(X)\leq 7$.

In this case, we have $\chi(\omega_X)\leq p_g(X)+6$. We deduce that
$$
d\leq \frac{64\chi(\omega_X)}{p_g(X)-3}\leq \frac{64(p_g(X)+6)}{p_g(X)-3}<67.
$$

{\bf Case 3.}  $p_g(F)\leq 8$ and $b=q(X)\geq 8$. 

Up to  some birational modifications of $X$ and $\Sigma$, we may assume that $\phi_X$ is a morphism, $\Sigma$ is smooth. Let $H$ be a very ample divisor on $Y$. Choose two elements $D_1,D_2\in |H|$ with $\supp D_1\cap\supp D_2=\emptyset$. Put $\widetilde{D_i}=\phi_{X,\ast}f^{\ast}D_i$ for $i=1,2$. Then the linear pencil spanned by $\widetilde{D_1}$ and $\widetilde{D_2}$ induces a rational map $\Sigma\dashrightarrow \PP^1$. After taking some higher birational model, we may assume that the rational map $\Sigma\dashrightarrow \PP^1$ is  a morphism. Let $h\colon \Sigma\to C$ be a fibration obtained from the Stein factorization of $\Sigma\to \PP^1$, then we have the following commutative diagram
\[
\begin{tikzcd}
 X \arrow[r, "\phi_X"]\arrow[d, "f"] & \Sigma \arrow[d,"h"]\\
Y \arrow[r,"\delta"] & C
\end{tikzcd}
\]
The image of the evaluation map
\[
\begin{tikzcd}
 \ev\colon H^0(f_{\ast}\omega_X)\otimes\sO_Y \arrow[r] & f_{\ast}\omega_X
\end{tikzcd}
\]
generates a rank $r$ subsheaf $(f_{\ast}\omega_X)_{\ev}$ of $f_{\ast}\omega_X$. We consider separately the two subcases.
\subsubsection*{Subcase $r<p_g(F)$} 
 By Theorem~\ref{Xiao_Eev}, we deduce that 
 $$
 b-1\leq(b-1)(p_g(F)-r)\leq r\leq p_g(F)-1.
 $$
 It follows that $b\leq p_g(F)\leq 8$. Note that 
 $$
\chi(\omega_X)=\chi(f_{\ast}\omega_X)+b-1\leq p_g(X)+b-1\leq p_g(X)+7.
$$
Then we have
\[
d\leq \frac{64\chi(\omega_X)}{p_g(X)-3}\leq \frac{64(p_g(X)+7)}{p_g(X)-3}< 67.
\]
\subsubsection*{Subcase $r=p_g(F)$} In this case, the natural restriction map $$H^0(K_X)\to H^0(K_F)$$
is surjective. We conclude that
\begin{equation}\label{equation:cdeg}
    d=\deg(\phi_F)\cdot\deg(\delta).
\end{equation}
By adopting the idea in \cite{Bea79}, one can prove that $\deg(\phi_F)\leq 36$ (see \cite[Proposition 1.1]{LY21} for the proof).
We are left to bound $\deg(\delta)$. To this end, take a global section $t\in H^0(f_{\ast}\omega_X)$ such that the zero divisor $D_t$  has maximal degree.

If $\deg (D_t)>\frac{3}{2}b$, then by Riemann-Roch theorem, we deduce that
\[
h^0(D_t)\geq \deg (D_t)+1-b>\frac{1}{3}\deg (D_t)+1.
\]
Consider the rational map induced by linear system $|D_t|$:
\[
\begin{tikzcd}
\phi_{|D_t|} \colon  Y \arrow[r] & \Sigma_t\subset\PP^{h^0(D_t)-1}
\end{tikzcd}
\]
then $\deg (D_t)\geq \deg\phi_{|D_t|}\cdot\deg(\Sigma_t)>\frac{1}{3}\deg (D_t)\cdot \deg(\phi_{|D_t|})$. So we have $ \deg(\phi_{|D_t|})\leq 2$.  Since $f^{\ast}|D_t|$ is a sub-linear system of $|K_X|$, the map $\phi_{|D_t|}$ factors through $\delta$. It follows that $\deg(\delta)\leq\deg (\phi_{|D_t|})\leq 2$.

Now we consider the case where $\deg (D_t)\leq \frac{3}{2}b$. Let $D$ be a sum of $b-2$ general points on $Y$. By Riemann-Roch theorem, we deduce that 
\begin{align*}
h^0(f_{\ast}\omega_X\otimes\sO_Y(-D))&\geq \chi(f_{\ast}\omega_X\otimes\sO_Y(-D))\\
&\geq \deg( f_{\ast}\omega_X\otimes\sO_Y(-D))-p_g(F)(b-1)\\
    &\geq p_g(F)>0.
\end{align*}
Let $s\in H^0(Y,f_{\ast}\omega_X\otimes\sO_Y(-D))$ be a nonzero section, then after twisting by $\sO_Y(D)$ yields a section $s_1$ of $f_{\ast}\omega_X$, with the corresponding divisor $D_1$ satisfying $D_1=D+div(s)\geq D$. Hence $\deg (\delta_{\ast}D_1)\geq b-2$. The rational map $\phi_{|D_1|}$, defined by linear system $|D_1|$, factors through $\delta$, and $t\in H^0(f_{\ast}\omega_X)$ has maximal degree, we infer that
\[
\frac{3}{2}b\geq \deg (D_t)\geq \deg (D_1)\geq\deg(\delta)\cdot(b-2),
\]
which implies that $\deg(\delta)\leq 2$ since $b=q(X)\geq 8$. 

 Thus we always have $\deg(\delta)\leq 2$. We conclude that $\deg(\phi_X)\leq 72$ by equality~(\ref{equation:cdeg}).

For the case $d=72$. By the above discussion, we know that $\deg (\phi_F)=36$ and $\deg(\delta)=2$. It follows that $F$ is a smooth minimal   surface of general type with $p_g(F)=3, q(F)=0$ and $K_F^2=36$. The proof is completed.
\end{proof}
An explicit example of a threefold of general type with canonical degree 
$72$ is easy to construct, building on the construction of surfaces of general type with maximal canonical degree $36$ (see \cite{LY21}  and \cite{Rit22}).
\begin{example}
    Let $S$ be a minimal surface of general type with $p_g(S)=3, q(S)=0, K_S^2=36$ and the canonical map of $S$ is generically finite of degree $36$. Let $C$ be a hyperelliptic curve of genus $b\geq 2$. Consider the product $3$-fold $X:=S\times C$, then $X$ has numerical invariants :
    \[
    p_g(X)=3b, q(X)=b, \deg(\phi_X)=72.
    \]
\end{example}

\bibliographystyle{alpha}
\bibliography{ref}

\end{document}